\newtheorem{theorem} {Theorem}[section]
\newtheorem{definition}[theorem] {Definition}
\newtheorem{example}[theorem] {Example}
\newtheorem{corollary}[theorem] {Corollary}
\font\rt=cmss9.360pk
\font\sd=cmcsc9.360pk
\begin{document}

~\vspace{-16mm}

%Your \newcommands below (if any):

%\newtheorem{theorem}{Theorem}
%\newtheorem{definition}{Definition}

\oddsidemargin 16.5truemm \evensidemargin 16.5truemm

\thispagestyle{plain}

\vspace{-0.25cc}

\hspace{7.5cm}{\scriptsize Vol. xx, No. xx (20xx), pp.~xx--xx.}\newline
\vspace{1.2cc}

\vspace{1.5cc}

\begin{center}
{\Large \textbf{MEASURE OF NONCOMPACTNESS IN THE STUDY OF SOLUTIONS FOR A
SYSTEM OF INTEGRAL EQUATIONS\\[0pt]
\vspace{1.5cc} \textsc{Vatan Karakaya}{\large \textsc{$^{1}$, Mohammad
Mursaleen$^{2}$, Nour El Houda Bouzara$^{3}$, and Derya Sekman $^{4}$}}\\[0pt%
]
}}

\bigskip

{\Large \textbf{{\small $^{1}$Department of Mathematical Engineering, Yildiz
Technical University, Davutpasa Campus, Esenler, 34210 Istanbul, Turkey,
vkkaya@yahoo.com }}}

{\Large \textbf{{\small $^{2}$Department of Mathematics, Aligarh Muslim
University, Aligarh 202002, India, mursaleenm@gmail.com }}}

{\Large \textbf{{\small $^{3}$ Faculty of Mathematics, University of Science
and Technology Houari Boumedi\`{e}ne, Bab-Ezzouar, 16111 Algies, Algeria,
bzr.nour@gmail.com }}}

{\Large \textbf{{\small $^{4}$ Faculty of Arts and Sciences, Department of
Mathematics, Ahi Evran University, 40100 Kirsehir, Turkey,
deryasekman@gmail.com}}}
\end{center}

{\Large \textbf{\rule{0mm}{6mm}\renewcommand{\thefootnote}{}\footnotetext{%
{\scriptsize \textit{2010 Mathematics Subject Classification}:47H10, 47H08,
45G15 \newline
\textrm{Received: dd-mm-yyyy, accepted: dd-mm-yyyy.} }} }}

\begin{center}
\vspace{1.5cc}

\parbox{24cc}{{\Small{\bf Abstract.}
In this work, we prove the existence of solutions for a tripled system of integral equations using some new results of fixed point theory associated with measure of noncompactness. These results extend the results in some previous works. Also, the condition under which the operator admits fixed points is more general than the others in literature.
}}
\end{center}

\vspace{0.25cc} 
\parbox{24cc}{\Small {\it Key words and Phrases}: Measure of
noncompactness, Fixed point, System of integral equations.}

\vspace{1.5cc}

\section{INTRODUCTION}

%sections of your paper.

In recent years, measure of noncompactness which was given by Kuratowski 
\cite{kuratovski} and has provided powerful tools for obtaining the
solutions of a large variety of integral equations and systems. One can find
related references in studies involving Aghajani et al. \cite{Aghajani11}, 
\cite{Aghajanicoupled}, \cite{Aghajani2}, \cite{ek1}, Banas \cite{banas1},
Banas and Rzepka \cite{banas3}, Mursaleen and Mohiuddine \cite{murs},
Mursaleen and Rizvi \cite{ek3}, Araba et al. \cite{reza}, Deepmala and \
Pathak \cite{deepmala}, Shaochun and Gan \cite{Shaochun}, Sikorska \cite%
{sikorska}, Alotaibi et al. \cite{ek2}, and many others.

In this paper, we have studied solvability of the system given by%
\begin{equation*}
\left\{ 
\begin{array}{c}
x\left( t\right) =g_{1}\left( t\right) +f_{1}\left( t,x\left( \theta
_{1}\left( t\right) \right) ,y\left( \theta _{1}\left( t\right) \right)
,z\left( \theta _{1}\left( t\right) \right) ,\varphi \left(
\int_{0}^{r_{1}\left( t\right) }h\left( t,s,x\left( \nu _{1}\left( s\right)
\right) ,y\left( \nu _{1}\left( s\right) \right) ,z\left( \nu _{1}\left(
s\right) \right) \right) ds\right) \right)  \\ 
y\left( t\right) =g_{2}\left( t\right) +f_{2}\left( t,x\left( \theta
_{2}\left( t\right) \right) ,y\left( \theta _{2}\left( t\right) \right)
,z\left( \theta _{2}\left( t\right) \right) ,\varphi \left(
\int_{0}^{r_{2}\left( t\right) }h\left( t,s,x\left( \nu _{2}\left( s\right)
\right) ,y\left( \nu _{2}\left( s\right) \right) ,z\left( \nu _{2}\left(
s\right) \right) \right) ds\right) \right)  \\ 
z\left( t\right) =g_{3}\left( t\right) +f_{3}\left( t,x\left( \theta
_{3}\left( t\right) \right) ,y\left( \theta _{3}\left( t\right) \right)
,z\left( \theta _{3}\left( t\right) \right) ,\varphi \left(
\int_{0}^{r_{3}\left( t\right) }h\left( t,s,x\left( \nu _{3}\left( s\right)
\right) ,y\left( \nu _{3}\left( s\right) \right) ,z\left( \nu _{3}\left(
s\right) \right) \right) ds\right) \right) 
\end{array}%
\right. ,
\end{equation*}%
by establishing some results of existence for fixed points of condensing
operators in Banach spaces.

Throughout this paper,we assume tha $X$ is a Banach space. Also we denote $%
\mathcal{B}_{X}$, $\overline{X}$ and $ConvX,$ the family of bounded subset,
closure and closed convex hull of $X$ respectively.

We now gather some well-known definitions and results from the literature
which will be used throughout this paper.

\begin{definition}[\protect\cite{banas2}]
Let $X$ be a Banach space and $\mathcal{B}_{X}$ the family of bounded subset
of $X.$ A map%
\begin{equation*}
\tau :\mathcal{B}_{X}\rightarrow \left[ 0,\infty \right)
\end{equation*}%
which satisfies the following:

\begin{enumerate}
\item $\tau \left( A\right) =0\Leftrightarrow A$ is a precompact set,

\item $A\subset B\Rightarrow \tau \left( A\right) \leqslant \tau \left(
B\right) ,$

\item $\tau \left( A\right) =\tau \left( \overline{A}\right) ,$ $\forall
A\in \mathcal{B}_{X},$

\item $\tau \left( ConvA\right) =\tau \left( A\right) ,$

\item $\tau \left( \lambda A+\left( 1-\lambda \right) B\right) \leqslant
\lambda \tau \left( A\right) +\left( 1-\lambda \right) \tau \left( B\right)
, $ for $\lambda \in \left[ 0,1\right] ,$

\item Let $\left( A_{n}\right) $ be a sequence of closed sets from $\mathcal{%
B}_{X}$\ such that $A_{n+1}\subseteq A_{n},$ $\left( n\geqslant 1\right) $
and $\lim\limits_{n\rightarrow \infty }\mu \left( A_{n}\right) =0.$ Then,
the set $A_{\infty }=\underset{n=1}{\overset{\infty }{\cap }}A_{n}$ is
nonempty and $A_{\infty }$ is precompact.
\end{enumerate}
\end{definition}

The functional $\tau $\ is called measure of noncompactness defined on the
Banach space $X$.

\begin{theorem}[\protect\cite{lec.note}]
Let $A$ be a nonempty closed, bounded and convex subset of $X$. If $%
T:A\rightarrow A$ is a continuous mapping on the subset $C\subset A$ 
\begin{equation*}
\tau \left( TC\right) \leqslant k\tau \left( C\right) ,\text{ \ }k\in \left[
0,1\right) \text{,}
\end{equation*}%
then $T$ has a fixed point.
\end{theorem}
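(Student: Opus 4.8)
The plan is to reproduce the classical Darbo argument: construct a nested sequence of nonempty, closed, bounded, convex sets on which the measure of noncompactness decays geometrically, pass to their intersection to obtain a nonempty \emph{compact} convex set that is invariant under $T$, and then conclude by Schauder's fixed point theorem. First I would set $C_{0}=C$ and define recursively
\[
C_{n+1}=Conv\,T\left( C_{n}\right) ,\qquad n\geqslant 0 .
\]
Since $T:C\rightarrow C$ we have $T\left( C_{0}\right) \subseteq C_{0}$, and because $C_{0}$ is closed and convex this forces $C_{1}=Conv\,T\left( C_{0}\right) \subseteq C_{0}$. An immediate induction then gives $C_{n+1}\subseteq C_{n}$ for every $n$: from $C_{n}\subseteq C_{n-1}$ one gets $T\left( C_{n}\right) \subseteq T\left( C_{n-1}\right) $ and hence $C_{n+1}\subseteq C_{n}$. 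Each $C_{n}$ is nonempty, closed and convex by construction (it is a closed convex hull), and bounded because $C_{n}\subseteq C$.

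Next I would exploit the invariance of $\mu$ under the closed convex hull, namely property $(4)$ of the defining Definition, together with the hypothesis $\mu \left( TA\right) \leqslant k\mu \left( A\right) $. These yield
\[
\mu \left( C_{n+1}\right) =\mu \left( Conv\,T\left( C_{n}\right) \right) =\mu \left( T\left( C_{n}\right) \right) \leqslant k\,\mu \left( C_{n}\right) ,
\]
so that by iteration $\mu \left( C_{n}\right) \leqslant k^{n}\mu \left( C_{0}\right) $. As $k\in \left[ 0,1\right) $, this shows $\mu \left( C_{n}\right) \rightarrow 0$ as $n\rightarrow \infty $.

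The decisive step invokes property $(6)$: the sequence $\left( C_{n}\right) $ is a decreasing chain of closed sets in $\mathcal{B}_{X}$ with $\mu \left( C_{n}\right) \rightarrow 0$, so the intersection $C_{\infty }=\bigcap_{n=1}^{\infty }C_{n}$ is nonempty and precompact. Moreover $C_{\infty }$ is closed and convex as an intersection of closed convex sets, and being precompact and closed it is compact; the monotonicity property $(2)$ gives $\mu \left( C_{\infty }\right) \leqslant \mu \left( C_{n}\right) \rightarrow 0$, confirming $\mu \left( C_{\infty }\right) =0$. Invariance $T\left( C_{\infty }\right) \subseteq C_{\infty }$ holds because $T\left( C_{\infty }\right) \subseteq T\left( C_{n}\right) \subseteq C_{n+1}$ for every $n$, hence $T\left( C_{\infty }\right) \subseteq \bigcap_{n}C_{n+1}=C_{\infty }$. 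Thus $T$ restricts to a continuous self-map of the nonempty compact convex set $C_{\infty }$, and Schauder's fixed point theorem produces a fixed point of $T$ in $C_{\infty }\subseteq C$.

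I expect the main obstacle to be not any isolated computation but the correct use of property $(6)$ to guarantee simultaneously that $C_{\infty }$ is nonempty, compact and convex, since it is precisely this reduction that allows the problem to be closed off by the compact-convex (Schauder) case; the geometric decay $\mu \left( C_{n}\right) \leqslant k^{n}\mu \left( C_{0}\right) \rightarrow 0$ is exactly what makes property $(6)$ applicable, so establishing that decay cleanly via properties $(3)$--$(4)$ and the Darbo inequality is the technical heart of the argument.
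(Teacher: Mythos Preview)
The paper does not supply a proof of this theorem; it is quoted from \cite{lec.note} as a known preliminary result (the classical Darbo fixed point theorem). Your argument is precisely the standard proof one finds in that reference: build the decreasing chain $C_{n+1}=Conv\,T(C_{n})$, use properties $(3)$--$(4)$ and the contractive estimate to get $\mu(C_{n})\leqslant k^{n}\mu(C_{0})\to 0$, invoke property $(6)$ to obtain a nonempty compact convex $C_{\infty}$ invariant under $T$, and finish with Schauder. It is correct and there is nothing to compare against in this paper.
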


The following theorem is considered as a generalization of Darbo fixed point
theorem.

\begin{theorem}[\protect\cite{mursaleen}]
Let $A\ $be a nonempty closed, bounded and convex subset of $X$ and $%
T:A\rightarrow A$ be a continuous mapping for any subset $C\subset A$%
\begin{equation*}
\tau \left( TC\right) \leqslant \gamma \left( \tau \left( C\right) \right)
\tau \left( C\right) ,
\end{equation*}%
where $\gamma :\mathbb{R}_{+}\rightarrow \left[ 0,1\right) $ that is $\gamma
\left( t_{n}\right) \rightarrow 1$ implies $t_{n}\rightarrow 0.$ Then, $T$
has at least one fixed point.
\end{theorem}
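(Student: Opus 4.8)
The plan is to deduce this generalized Darbo theorem from the classical Darbo fixed point theorem (the first unnumbered \cite{lec.note} theorem above), using a standard iterated-sets construction. First I would build the decreasing chain of sets on which the measure of noncompactness will be driven to zero. Set $C_{0}=C$ and define inductively $C_{n+1}=\overline{\mathrm{Conv}}\,(TC_{n})$ for $n\geqslant 0$. Since $T:C\to C$ and $C$ is closed, bounded and convex, each $C_{n}$ is a nonempty closed bounded convex subset of $C$, and an easy induction gives $C_{n+1}\subseteq C_{n}$ together with $TC_{n}\subseteq C_{n}$. Writing $\mu_{n}=\mu(C_{n})$, properties (3) and (4) of the measure give $\mu_{n+1}=\mu(\overline{\mathrm{Conv}}\,(TC_{n}))=\mu(TC_{n})$, and the hypothesis applied to $A=C_{n}$ yields
\begin{equation*}
\mu_{n+1}=\mu(TC_{n})\leqslant \beta(\mu(C_{n}))\,\mu(C_{n})=\beta(\mu_{n})\,\mu_{n}.
\end{equation*}

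Next I would extract convergence of $(\mu_{n})$ to zero, which is the real heart of the argument and the step I expect to require the most care. Because $\beta$ takes values in $[0,1)$, the displayed inequality shows $\mu_{n+1}\leqslant \mu_{n}$, so $(\mu_{n})$ is nonincreasing and bounded below by $0$; hence it converges to some limit $L\geqslant 0$. The claim is $L=0$. Suppose for contradiction that $L>0$. From $\mu_{n+1}\leqslant \beta(\mu_{n})\,\mu_{n}$ and $\mu_{n}\geqslant L>0$ one gets $\beta(\mu_{n})\geqslant \mu_{n+1}/\mu_{n}$, and taking the limit (using $\mu_{n}\to L$, $\mu_{n+1}\to L$) forces $\liminf_{n}\beta(\mu_{n})\geqslant 1$; since $\beta<1$ this pins down a subsequence along which $\beta(\mu_{n})\to 1$. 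The defining property of $\beta$ — namely that $\beta(t_{n})\to 1$ implies $t_{n}\to 0$ — then gives $\mu_{n}\to 0$ along that subsequence, contradicting $\mu_{n}\geqslant L>0$. Therefore $L=0$, i.e. $\lim_{n\to\infty}\mu(C_{n})=0$.

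Finally I would harvest the fixed point from axiom (6) of the measure of noncompactness. The sets $C_{n}$ are closed, belong to $\mathcal{B}_{X}$, satisfy $C_{n+1}\subseteq C_{n}$, and $\mu(C_{n})\to 0$; hence by property (6) the intersection $C_{\infty}=\bigcap_{n=1}^{\infty}C_{n}$ is nonempty and precompact, and as an intersection of closed convex sets it is closed and convex. Moreover $TC_{\infty}\subseteq C_{\infty}$: indeed for each $n$, $T C_{\infty}\subseteq TC_{n}\subseteq \overline{\mathrm{Conv}}\,(TC_{n})=C_{n+1}$, so $TC_{\infty}$ lies in every $C_{n}$. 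Thus $T$ maps the nonempty, compact (closed and precompact), convex set $C_{\infty}$ into itself continuously, and $\mu(C_{\infty})=0$ means $C_{\infty}$ is in particular compact; applying either Schauder's theorem on $C_{\infty}$ or directly the Darbo-type Theorem~2 (with any fixed $k\in[0,1)$, since $\mu$ vanishes on $C_{\infty}$) produces a fixed point of $T$ in $C_{\infty}\subseteq C$. The only delicate point throughout is the limiting argument showing $\mu_{n}\to 0$, where the somewhat unusual hypothesis on $\beta$ must be invoked exactly as stated rather than assuming continuity or monotonicity of $\beta$.
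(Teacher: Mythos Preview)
The paper does not supply its own proof of this theorem; it is quoted from \cite{mursaleen} in the preliminaries as a known result. Your argument is correct and is essentially the standard proof given in that reference: build the nested sequence $C_{n+1}=\overline{\mathrm{Conv}}(TC_n)$, show $\mu(C_n)$ is nonincreasing, use the Geraghty-type hypothesis on $\beta$ to force the limit to be zero, and then invoke axiom~(6) together with Schauder on the compact convex kernel $C_\infty$. One cosmetic remark: once you establish $\liminf_n\beta(\mu_n)\geqslant 1$ and observe $\beta<1$, you in fact get $\beta(\mu_n)\to 1$ for the full sequence, not merely along a subsequence, so the contradiction is immediate without passing to a subsequence.
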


\begin{corollary}[\protect\cite{mursaleen}]
\label{fpth}Let $A\ $\ be a nonempty closed, bounded and convex subset of $X$
and $T:A\rightarrow A$ be a continuous mapping for any subset $C\subset A$%
\begin{equation*}
\tau \left( TC\right) \leqslant \varphi \left( \tau \left( C\right) \right) ,
\end{equation*}%
where $\varphi :\mathbb{R}_{+}\rightarrow \mathbb{R}_{+}$ is a nondecreasing
and upper semicontinuous functions, that is, for every $t>0,$ $\varphi
\left( t\right) <t.$ Then, $T$ has at least one fixed point.
\end{corollary}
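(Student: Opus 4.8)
The plan is to run the classical Darbo iteration and then finish with Schauder's theorem on the limiting compact set; the two hypotheses on $\varphi$ enter precisely at the step where the measures are forced to vanish. One could instead try to deduce the statement directly from the preceding generalized Darbo theorem (Theorem~2) by setting $\beta(t)=\varphi(t)/t$ for $t>0$ and $\beta(0)=0$: then $\beta(\mathbb{R}_{+})\subseteq[0,1)$ because $\varphi(t)<t$, and $\mu(TA)\le\beta(\mu(A))\,\mu(A)$ holds (the case $\mu(A)=0$ being handled by noting that $\overline{A}$ is then compact, so $T\overline{A}$ is compact and $\mu(TA)=0$). The delicate point with this route is verifying the implication $\beta(t_{n})\to 1\Rightarrow t_{n}\to 0$ globally, since $\varphi(t)/t$ may tend to $1$ as $t\to\infty$; this can be patched by exploiting that $\mu$ is bounded on $C$, but it is cleaner to argue directly.

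For the direct argument, set $C_{0}=C$ and $C_{n+1}=Conv\left(TC_{n}\right)$ for $n\ge 0$, where $Conv$ denotes the closed convex hull. Since $TC\subseteq C$ and $C$ is closed and convex, $C_{1}=Conv\left(TC_{0}\right)\subseteq C_{0}$; inductively, $C_{n}\subseteq C_{n-1}$ gives $TC_{n}\subseteq TC_{n-1}$ and hence $C_{n+1}\subseteq C_{n}$. Thus $\left(C_{n}\right)$ is a nested sequence of nonempty, closed, bounded, convex sets, and the inclusion $TC_{n}\subseteq C_{n+1}\subseteq C_{n}$ shows that $T$ maps each $C_{n}$ into itself.

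Write $a_{n}=\mu\left(C_{n}\right)$. By the monotonicity property of $\mu$ the sequence $\left(a_{n}\right)$ is nonincreasing, so $a_{n}\downarrow L$ for some $L\ge 0$. Using the invariance of $\mu$ under the closed convex hull together with the hypothesis,
\[
a_{n+1}=\mu\left(Conv\left(TC_{n}\right)\right)=\mu\left(TC_{n}\right)\le\varphi\left(a_{n}\right).
\]
The main obstacle is to conclude that $L=0$. Suppose $L>0$. Since $a_{n}\downarrow L$ and $\varphi$ is upper semicontinuous, $\limsup_{n}\varphi\left(a_{n}\right)\le\varphi\left(L\right)$; passing to the upper limit in $a_{n+1}\le\varphi\left(a_{n}\right)$ yields $L\le\varphi\left(L\right)$, which contradicts $\varphi\left(L\right)<L$. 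Hence $L=0$, that is, $\mu\left(C_{n}\right)\to 0$. Here the monotonicity of $\varphi$ makes the recursion well behaved and, combined with upper semicontinuity, controls the limit of $\varphi$ along $a_{n}$, while the condition $\varphi\left(t\right)<t$ supplies the final contradiction.

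Finally, by the generalized Cantor property (item (6) of Definition~1), $C_{\infty}=\bigcap_{n\ge 0}C_{n}$ is nonempty and precompact; as an intersection of closed convex sets it is also closed and convex, hence compact and convex. Moreover $T\left(C_{\infty}\right)\subseteq C_{\infty}$, since $x\in C_{\infty}$ gives $Tx\in TC_{n}\subseteq C_{n+1}$ for every $n$, so $Tx\in C_{\infty}$. Thus $T$ restricts to a continuous self-map of the nonempty compact convex set $C_{\infty}$, and Schauder's fixed point theorem furnishes a fixed point of $T$ in $C_{\infty}\subseteq C$. The only genuinely substantive step is the vanishing $\mu\left(C_{n}\right)\to 0$, where both $\varphi\left(t\right)<t$ and the upper semicontinuity of $\varphi$ are indispensable.
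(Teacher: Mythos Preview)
Your proof is correct. The paper itself does not supply a proof of this statement; it is quoted from \cite{mursaleen}, where it is presented as a corollary of the preceding generalized Darbo theorem (Theorem~2 here), the intended deduction being exactly the construction $\beta(t)=\varphi(t)/t$ that you sketch in your opening paragraph. You instead give the direct iteration-and-Schauder argument, which is the standard self-contained proof of results of this type. The advantage of your route is that it sidesteps the issue you correctly flag: $\varphi(t)/t\to 1$ is possible as $t\to\infty$, so the hypothesis ``$\beta(t_{n})\to 1\Rightarrow t_{n}\to 0$'' of Theorem~2 need not hold on all of $\mathbb{R}_{+}$ without further invoking the boundedness of $\mu$ on $C$. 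The cost is that you repeat the nested-iterate construction rather than simply citing Theorem~2; but since Theorem~2 is itself proved by the same scheme, nothing of substance is lost, and your argument is arguably cleaner.
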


\begin{theorem}[\protect\cite{kamenski}]
\label{th copy}Let $\tau _{1},$ $\tau _{2},$ ...,$\tau _{n}$ be measures of
noncompactness in Banach spaces $A_{1},$ $A_{2},$ ...$A_{n},$ $\left( \text{%
respectively}\right) $.

Then the function%
\begin{equation*}
\widetilde{\tau }\left( X\right) =F\left( \tau _{1}\left( X_{1}\right) ,\tau
_{2}\left( X_{2}\right) ,...,\tau _{n}\left( X_{n}\right) \right) ,
\end{equation*}%
defines a measure of noncompactness in $A_{1}\times A_{2}\times $ ...$\times
A_{n}$ where $X_{i}$ is the natural projection of $X$\ on $A_{i},$ for $%
i=1,2,...,n,$ and $F$ be a convex function defined by%
\begin{equation*}
F:\left[ 0,\infty \right) \times \left[ 0,\infty \right) \times ...\times %
\left[ 0,\infty \right) \rightarrow \left[ 0,\infty \right) ,
\end{equation*}%
such that,%
\begin{equation*}
F\left( x_{1},x_{2},...,x_{n}\right) =0\Leftrightarrow x_{i}=0,\text{ for }%
i=1,2,...,n.
\end{equation*}
\end{theorem}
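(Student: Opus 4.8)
The plan is to verify, one by one, the six conditions in the Definition for the candidate functional $\widetilde{\mu}$, transporting each property of the individual measures $\mu_i$ through the natural projections $P_i:E_1\times\cdots\times E_n\to E_i$, $X_i=P_i(X)$. The four facts about projections that drive every step are: $P_i$ is linear and $1$-Lipschitz, so for bounded $X$ the set $X_i$ is bounded and $\mu_i(X_i)$ is defined; $P_i(\lambda A+(1-\lambda)B)=\lambda A_i+(1-\lambda)B_i$ and $P_i(\mathrm{Conv}\,A)=\mathrm{Conv}\,A_i$; $A_i\subseteq P_i(\overline{A})\subseteq\overline{A_i}$; and, crucially, a bounded set $A\subseteq E_1\times\cdots\times E_n$ is precompact if and only if each projection $A_i$ is precompact (one direction is continuity of $P_i$, the other is that a finite product of compacta is compact, via $A\subseteq\overline{A_1}\times\cdots\times\overline{A_n}$).

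Given these, conditions (1), (3), (4) are routine. For (1): $\widetilde{\mu}(A)=F(\mu_1(A_1),\dots,\mu_n(A_n))=0$ iff every $\mu_i(A_i)=0$ by the zero-condition on $F$, iff every $A_i$ is precompact, iff $A$ is precompact. For (3): from $A_i\subseteq P_i(\overline A)\subseteq\overline{A_i}$ together with monotonicity and closure-invariance of each $\mu_i$ one gets $\mu_i(P_i(\overline A))=\mu_i(A_i)$, so the vectors fed into $F$ coincide and $\widetilde{\mu}(\overline A)=\widetilde{\mu}(A)$. For (4): $P_i(\mathrm{Conv}\,A)=\mathrm{Conv}\,A_i$ and $\mu_i(\mathrm{Conv}\,A_i)=\mu_i(A_i)$ again make the two vectors identical.

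Conditions (2) and (5) are where the structure of $F$ is genuinely used, and here I must flag what I regard as the \textbf{main subtlety}: the argument requires $F$ to be \emph{coordinatewise nondecreasing}, and I will take this as part of the hypotheses on $F$. Convexity together with $F\ge 0$ and $F(x)=0\Leftrightarrow x=0$ does \emph{not} by itself force monotonicity (for instance $F(x,y)=x^2-xy+y^2$ is convex, nonnegative, vanishes only at the origin, yet decreases in $x$ near $(0,1)$), and without monotonicity $\widetilde\mu$ fails (2). Assuming monotonicity, (2) is immediate: $A\subseteq B$ gives $A_i\subseteq B_i$, hence $\mu_i(A_i)\le\mu_i(B_i)$, and applying the nondecreasing $F$ yields $\widetilde{\mu}(A)\le\widetilde{\mu}(B)$. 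For (5) I combine three ingredients in order: linearity of $P_i$ to write $P_i(\lambda A+(1-\lambda)B)=\lambda A_i+(1-\lambda)B_i$; the algebraic subadditivity of each $\mu_i$ to bound $\mu_i(\lambda A_i+(1-\lambda)B_i)\le\lambda\mu_i(A_i)+(1-\lambda)\mu_i(B_i)$; then monotonicity of $F$ to carry these bounds inside $F$, followed by convexity of $F$ to split
\[
F\bigl(\lambda\mu_1(A_1)+(1-\lambda)\mu_1(B_1),\dots\bigr)\le\lambda F(\mu_1(A_1),\dots)+(1-\lambda)F(\mu_1(B_1),\dots),
\]
which is exactly $\lambda\widetilde{\mu}(A)+(1-\lambda)\widetilde{\mu}(B)$.

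The real work is condition (6), the generalized Cantor property, which I expect to be the \textbf{hardest step}. Let $(A_m)$ be a nested sequence of nonempty closed bounded sets with $\widetilde{\mu}(A_m)\to 0$. Since $A_{m+1}\subseteq A_m$ forces $(A_{m+1})_i\subseteq(A_m)_i$, each sequence $m\mapsto\mu_i((A_m)_i)$ is nonincreasing, hence converges to some $\ell_i\ge 0$; monotonicity of $F$ gives $0\le F(\ell_1,\dots,\ell_n)\le\lim_m\widetilde{\mu}(A_m)=0$, so by the zero-condition every $\ell_i=0$, i.e. $\mu_i((A_m)_i)\to 0$ for each $i$. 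Applying condition (6) for $\mu_i$ to the nested closed sets $\overline{(A_m)_i}$ shows $K_i:=\bigcap_m\overline{(A_m)_i}$ is nonempty and precompact, whence $A_\infty=\bigcap_m A_m$ has all projections contained in precompact sets and so is itself precompact (and closed). The delicate point is \emph{nonemptiness} of $A_\infty$: choosing $x^{(m)}\in A_m$, the coordinate tails $\{x^{(k)}_i:k\ge m\}\subseteq(A_m)_i$ have $\mu_i$ tending to $0$, so condition (6) for $\mu_i$ lets me extract along a single diagonal subsequence a point $x^\ast$ whose $i$-th coordinate is a limit of $x^{(k)}_i$; closedness of each $A_{m_0}$ then places $x^\ast$ in every $A_{m_0}$, hence in $A_\infty$. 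Making this simultaneous extraction precise across the $n$ coordinates, while keeping a common index sequence, is the only place demanding care. With (1)--(6) established, $\widetilde{\mu}$ is a measure of noncompactness on $E_1\times\cdots\times E_n$.
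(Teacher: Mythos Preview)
The paper does not supply its own proof of this theorem: it is quoted from the monograph of Akhmerov, Kamenski\u{\i} et al.\ (the reference \texttt{[kamenski]}) and stated in the preliminaries without argument. So there is no ``paper's proof'' to compare your attempt against; the authors simply invoke the result and move on to use the special cases $F(x,y,z)=\max\{x,y,z\}$ and $F(x,y,z)=x+y+z$.

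That said, your verification of axioms (1)--(6) is essentially the standard one, and your flag about monotonicity is well taken: as literally stated here (convex $F$ with $F(x)=0\Leftrightarrow x=0$), the hypotheses are not quite sufficient to force axiom (2), and the original source indeed assumes $F$ monotone in each coordinate. Your counterexample $F(x,y)=x^2-xy+y^2$ is apt. With that extra hypothesis your arguments for (1)--(5) are correct. For (6), your outline is right in spirit but the nonemptiness step can be simplified: once you know each $\mu_i((A_m)_i)\to 0$, you have $\widetilde{\mu}(A_m)\to 0$ already by assumption, and the Cantor-type property of each $\mu_i$ gives nonempty compact $K_i=\bigcap_m\overline{(A_m)_i}$; then any diagonal limit of a sequence $x^{(m)}\in A_m$ (extracted coordinate by coordinate, using that the finitely many coordinate sequences live in the precompact $K_i$) lands in every closed $A_{m_0}$, hence in $A_\infty$. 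Your sketch of this is correct; it just does not need to be framed as the ``hardest step'' once the coordinatewise convergence $\mu_i((A_m)_i)\to 0$ is in hand.
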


\begin{example}[\protect\cite{nour}]
We can notice that by taking 
\begin{equation*}
F\left( x,y,z\right) =\max \left\{ x,y,z\right\} \text{ for any }\left(
x,y,z\right) \in \left[ 0,\infty \right) \times \left[ 0,\infty \right)
\times \left[ 0,\infty \right) ,
\end{equation*}%
or%
\begin{equation*}
F\left( x,y,z\right) =x+y+z\text{ for any }\left( x,y,z\right) \in \left[
0,\infty \right) \times \left[ 0,\infty \right) \times \left[ 0,\infty
\right) .
\end{equation*}%
Then, $F$ satisfies the conditions of Theorem \ref{th copy}. Thus, for a
measure of noncompactness $\tau _{i}$ $\left( i=1,2,3\right) $, we have that 
\begin{equation*}
\widetilde{\tau }\left( X\right) =\max \left( \tau _{1}\left( X_{1}\right)
,\tau _{2}\left( X_{2}\right) ,\tau _{3}\left( X_{3}\right) \right) ,
\end{equation*}%
or 
\begin{equation*}
\widetilde{\tau }\left( X\right) =\tau _{1}\left( X_{1}\right) +\tau
_{2}\left( X_{2}\right) +\tau _{3}\left( X_{3}\right) ,
\end{equation*}%
defines a measure of noncompactness in the space $A\times A\times A$ where $%
X_{i}$, $i=1,2,3$ are the natural projections of $X$ on $A_{i}$.
\end{example}

\section{MAIN RESULTS}

\begin{theorem}
\label{mainth}Let $A$ be a nonempty, bounded ,closed and convex subset of a
Banach space $X$\ and let $\varphi :\mathbb{R}^{+}\rightarrow \mathbb{R}^{+}$
be a nondecreasing and upper semicontinous function such that $\varphi
\left( t\right) <t$ for all $t>0.$ Then for any measure of noncompactness $%
\tau $, and continuous operators $T_{i}:A\times A\times A\rightarrow A$ $%
\left( i=1,2,3\right) $ satisfying%
\begin{equation}
\tau \left( T_{i}\left( X_{1}\times X_{2}\times X_{3}\right) \right)
\leqslant \varphi \left( \max \left( \tau \left( X_{1}\right) ,\tau \left(
X_{2}\right) ,\tau \left( X_{3}\right) \right) \right) ,\text{ }%
X_{1},X_{2},X_{3}\in A,  \label{cndt}
\end{equation}%
there exist $u,v,z\in A$ such that$\left\{ 
\begin{array}{c}
T_{1}\left( u,v,z\right) =u \\ 
T_{2}\left( u,v,z\right) =v \\ 
T_{3}\left( u,v,z\right) =z%
\end{array}%
\right. .$
\end{theorem}

\textsc{Proof. }Consider the following measure of noncompactness%
\begin{equation*}
\widetilde{\tau }\left( A\times A\times A\right) =\max \left( \tau \left(
X_{1}\right) ,\tau \left( X_{2}\right) ,\tau \left( X_{3}\right) \right) ,
\end{equation*}%
where $X_{1},X_{2},X_{3}\in A$ and the mapping $\ T:A\times A\times
A\rightarrow A,$ 
\begin{equation*}
T\left( u,v,z\right) =\left( T_{1}\left( u,v,z\right) ,T_{2}\left(
u,v,z\right) ,T_{3}\left( u,v,z\right) \right) .
\end{equation*}%
We have,%
\begin{eqnarray*}
\widetilde{\tau }\left( T\left( A\times A\times A\right) \right) &=&%
\widetilde{\tau }\left( \left( T_{1}\left( X_{1}\times X_{2}\times
X_{3}\right) \right) ,T_{2}\left( X_{1}\times X_{2}\times X_{3}\right)
,T_{3}\left( X_{1}\times X_{2}\times X_{3}\right) \right) \\
&=&\max \left\{ \tau \left( T_{1}\left( X_{1}\times X_{2}\times X_{3}\right)
\right) ,\tau \left( T_{2}\left( X_{1}\times X_{2}\times X_{3}\right)
\right) ,\tau \left( T_{3}\left( X_{1}\times X_{2}\times X_{3}\right)
\right) \right\} \\
&\leqslant &\max \left\{ \varphi \left( \max \left( \tau \left( X_{1}\right)
,\tau \left( X_{2}\right) ,\tau \left( X_{3}\right) \right) \right) ,\varphi
\left( \max \left( \tau \left( X_{1}\right) ,\tau \left( X_{2}\right) ,\tau
\left( X_{3}\right) \right) \right) ,\right. \\
&&\left. \varphi \left( \max \left( \tau \left( X_{1}\right) ,\tau \left(
X_{2}\right) ,\tau \left( X_{3}\right) \right) \right) \right\} .
\end{eqnarray*}%
By hypothesis $\varphi $\ is a non-decreasing function, then%
\begin{eqnarray*}
\widetilde{\mu }\left( T\left( A\times A\times A\right) \right) &\leqslant
&\varphi \left[ \max \left\{ \max \left( \mu \left( X_{1}\right) ,\mu \left(
X_{2}\right) ,\mu \left( X_{3}\right) \right) ,\max \left( \mu \left(
X_{1}\right) ,\mu \left( X_{2}\right) ,\mu \left( X_{3}\right) \right)
\right. \right. , \\
&&\left. \left. \max \left( \mu \left( X_{1}\right) ,\mu \left( X_{2}\right)
,\mu \left( X_{3}\right) \right) \right\} \right] .
\end{eqnarray*}%
Consequently,%
\begin{equation*}
\widetilde{\mu }\left( T\left( A\times A\times A\right) \right) \leqslant
\varphi \left( \widetilde{\mu }\left( A\times A\times A\right) \right) .
\end{equation*}%
So,%
\begin{equation*}
\mu \left( T_{1}\left( x,y,z\right) ,T_{2}\left( x,y,z\right) ,T_{3}\left(
x,y,z\right) \right) \leqslant \varphi \left( \max \left( \mu \left(
X_{1}\right) ,\mu \left( X_{2}\right) ,\mu \left( X_{3}\right) \right)
\right) .
\end{equation*}%
By Corollary \ref{fpth}, we conclude that there exist $x^{\ast },y^{\ast
},z^{\ast }\in A$ such that%
\begin{equation*}
T\left( x^{\ast },y^{\ast },z^{\ast }\right) =\left( x^{\ast },y^{\ast
},z^{\ast }\right) .
\end{equation*}%
In the other hand,%
\begin{equation*}
T\left( x^{\ast },y^{\ast },z^{\ast }\right) =\left( T_{1}\left( x^{\ast
},y^{\ast },z^{\ast }\right) ,T_{2}\left( x^{\ast },y^{\ast },z^{\ast
}\right) ,T_{3}\left( x^{\ast },y^{\ast },z^{\ast }\right) \right) .
\end{equation*}%
Hence,%
\begin{equation*}
\left\{ 
\begin{array}{c}
T_{1}\left( x^{\ast },y^{\ast },z^{\ast }\right) =x^{\ast } \\ 
T_{2}\left( x^{\ast },y^{\ast },z^{\ast }\right) =y^{\ast } \\ 
T_{3}\left( x^{\ast },y^{\ast },z^{\ast }\right) =z^{\ast }%
\end{array}%
\right. .
\end{equation*}

\begin{definition}[\protect\cite{nour}]
A tripled $\left( x,y,z\right) $ of a mapping $T:A\times A\times
A\rightarrow A,$ is called a tripled fixed point if 
\begin{equation*}
T\left( x,y,z\right) =x,\text{ \ }T\left( y,x,z\right) =y\text{ \ and \ }%
T\left( z,y,x\right) =z.
\end{equation*}
\end{definition}

\begin{remark}
Let $T:A\times A\times A\rightarrow A$ be a continuous mapping. If we define 
$T_{1}\left( x,y,z\right) =T\left( x,y,z\right) ,$ $T_{2}\left( x,y,z\right)
=T\left( y,x,z\right) $ and $T_{3}\left( x,y,z\right) =T\left( z,y,x\right) ,
$ then main results of \cite{nour} can be considered as a result of Theorem %
\ref{mainth}.
\end{remark}

It is very natural to extend the above result from three dimensions to
multidimensional fixed point and in the same way we can prove the following
theorem.

\begin{theorem}
Let $A$ be a nonempty, bounded ,closed and convex subset of a Banach space $%
X $\ and let $\varphi :\mathbb{R}^{+}\rightarrow \mathbb{R}^{+}$ be a
nondecreasing and upper semicontinous function such that $\varphi \left(
t\right) <t$ for all $t>0.$ Then for any measure of noncompactness $\mu \ $%
and for continuous operators $T_{i}:A^{n}\rightarrow \Omega $ \ $\left(
i=1,...,n\right) $ satifying%
\begin{equation*}
\mu \left( T_{i}\left( X_{1}\times ...\times X_{n}\right) \right) \leqslant
\varphi \left( \max \left( \mu \left( X_{1}\right) ,...,\mu \left(
X_{n}\right) \right) \right) ,\text{ }X_{i}\in A,\text{ }i=\overline{1,n},
\end{equation*}%
there exist $x_{1}^{\ast },...,x_{n}^{\ast }$ such that%
\begin{equation*}
\left\{ 
\begin{array}{c}
T_{1}\left( x_{1}^{\ast },...,x_{n}^{\ast }\right) =x_{1}^{\ast } \\ 
\vdots \\ 
T_{n}\left( x_{1}^{\ast },...,x_{n}^{\ast }\right) =x_{n}^{\ast }%
\end{array}%
\right. .
\end{equation*}
\end{theorem}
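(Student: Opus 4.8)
The plan is to repeat, \emph{mutatis mutandis}, the argument already carried out for the tripled system, the only new feature being the passage from three factors to $n$ factors. First I would equip the product $A^{n}=A\times\cdots\times A$ ($n$ copies) with the measure of noncompactness
\[
\widetilde{\mu}\left(X_{1}\times\cdots\times X_{n}\right)=\max\left(\mu\left(X_{1}\right),\ldots,\mu\left(X_{n}\right)\right),
\]
which is legitimate by Theorem \ref{th copy} applied with the convex function $F\left(x_{1},\ldots,x_{n}\right)=\max\left(x_{1},\ldots,x_{n}\right)$, since this $F$ vanishes precisely when every coordinate vanishes. Then I would introduce the single operator $T:A^{n}\rightarrow A^{n}$ that stacks the given maps with a common argument,
\[
T\left(x_{1},\ldots,x_{n}\right)=\left(T_{1}\left(x_{1},\ldots,x_{n}\right),\ldots,T_{n}\left(x_{1},\ldots,x_{n}\right)\right),
\]
so that a fixed point of $T$ is exactly a solution of the asserted system.

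Before invoking the fixed point result I would verify the standing hypotheses of Corollary \ref{fpth} for $T$ on $C=A^{n}$. The set $A^{n}$ is nonempty, bounded, closed and convex because it is a finite product of sets enjoying these properties, and $T$ is continuous because each coordinate map $T_{i}$ is continuous. The function $\varphi$ is, by assumption, nondecreasing, upper semicontinuous, and satisfies $\varphi(t)<t$ for $t>0$, exactly as Corollary \ref{fpth} demands.

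The core of the argument is the Darbo-type estimate $\widetilde{\mu}\left(T(W)\right)\leq\varphi\left(\widetilde{\mu}(W)\right)$ for an arbitrary subset $W\subseteq A^{n}$. Here lies the one point that deserves care: a general $W$ need not be a Cartesian product, so I would pass to its natural projections $X_{i}=\pi_{i}(W)$, use the inclusion $W\subseteq X_{1}\times\cdots\times X_{n}$ together with the monotonicity of $\mu$ to obtain, for each $i$,
\[
\mu\left(T_{i}(W)\right)\leq\mu\left(T_{i}\left(X_{1}\times\cdots\times X_{n}\right)\right)\leq\varphi\left(\max\left(\mu(X_{1}),\ldots,\mu(X_{n})\right)\right)=\varphi\left(\widetilde{\mu}(W)\right),
\]
where the middle inequality is the hypothesis on the $T_{i}$ and the last equality uses $\widetilde{\mu}(W)=\max_{j}\mu(\pi_{j}(W))$. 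Taking the maximum over $i$ and recalling that $\widetilde{\mu}\left(T(W)\right)=\max_{i}\mu\left(\pi_{i}(T(W))\right)=\max_{i}\mu\left(T_{i}(W)\right)$ yields the desired inequality, the $n$ identical upper bounds collapsing to the single value $\varphi\left(\widetilde{\mu}(W)\right)$.

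With this estimate in hand, Corollary \ref{fpth} applies and produces a fixed point $\left(x_{1}^{\ast},\ldots,x_{n}^{\ast}\right)\in A^{n}$ of $T$. Reading the identity $T\left(x_{1}^{\ast},\ldots,x_{n}^{\ast}\right)=\left(x_{1}^{\ast},\ldots,x_{n}^{\ast}\right)$ coordinatewise gives $T_{i}\left(x_{1}^{\ast},\ldots,x_{n}^{\ast}\right)=x_{i}^{\ast}$ for every $i=1,\ldots,n$, which is precisely the system claimed in the statement. The only genuine obstacle is the product-set reduction via projections in the third paragraph; once that is settled, the remaining steps are mechanical analogues of the tripled case.
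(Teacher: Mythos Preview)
Your proposal is correct and follows exactly the route the paper indicates: the paper does not supply a separate proof for the $n$-dimensional statement but merely says it is proved ``in the same way'' as the tripled case, and your argument is precisely that generalization, using the product measure $\widetilde{\mu}=\max_{i}\mu(X_{i})$, the stacked operator $T=(T_{1},\ldots,T_{n})$, and Corollary~\ref{fpth}. Your treatment is in fact slightly more careful than the paper's own tripled proof, since you explicitly reduce a general subset $W\subseteq A^{n}$ to its projections $X_{i}=\pi_{i}(W)$ before invoking the hypothesis on the $T_{i}$, rather than tacitly assuming $W$ is a Cartesian product.
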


As a particular case we get the following corollary:

\begin{corollary}[\protect\cite{Aghajanicoupled}]
Let $A$ be a nonempty, bounded ,closed and convex subset of a Banach space $%
X $\ and let $\varphi :\mathbb{R}^{+}\rightarrow \mathbb{R}^{+}$ be a
nondecreasing and upper semicontinous function such that $\varphi \left(
t\right) <t$ for all $t>0.$ Then for any measure of noncompactness $\mu ,$
the continuous operator $G:A^{n}\rightarrow A$ satisfying%
\begin{equation*}
\mu \left( G\left( X_{1}\times ...\times X_{n}\right) \right) \leqslant
k\max \left( \mu \left( X_{1}\right) ,...,\mu \left( X_{n}\right) \right) ,%
\text{ }X_{1},...,X_{n}\in A.
\end{equation*}
\end{corollary}

And for the case $n=2,$ we have the following result.

\begin{corollary}[\protect\cite{Aghajanicoupled}]
Let $A$ be a nonempty, bounded ,closed and convex subset of a Banach space $%
X $\ and let $\varphi :\mathbb{R}^{+}\rightarrow \mathbb{R}^{+}$ be a
nondecreasing and upper semicontinous function such that $\varphi \left(
t\right) <t$ for all $t>0.$ Then for any measure of noncompactness $\mu ,$
the continuous operator $G:A\times A\rightarrow A$ satisfying%
\begin{equation*}
\mu \left( G\left( X_{1}\times X_{2}\right) \right) \leqslant k\max \left(
\mu \left( X_{1}\right) ,\mu \left( X_{2}\right) \right) ,\text{ }%
X_{1},X_{2}\in A.
\end{equation*}
\end{corollary}

In the following we choose for the space $X$ the space $BC\left( \mathbb{R}%
^{+}\right) $, i.e., the space of all real functions defined, bounded and
continuous on $\mathbb{R}^{+}.$ Then, we get the following theorem.

\begin{theorem}
\label{main}Let $A$ be a nonempty, bounded, closed and convex subset of $%
BC\left( \mathbb{R}^{+}\right) $ and $T_{i}:A\times A\times A\rightarrow A$
be a continuous operator such that for every $x,y,z,u,v,w\in A.$%
\begin{equation}
\left\Vert T_{i}\left( x,y,z\right) -T_{i}\left( u,v,w\right) \right\Vert
_{\infty }\leqslant \varphi \left( \max \left\{ \left\Vert x-u\right\Vert
_{\infty },\left\Vert y-v\right\Vert _{\infty },\left\Vert z-w\right\Vert
_{\infty }\right\} \right) ,  \label{cndtn}
\end{equation}%
where $\varphi :\mathbb{R}^{+}\rightarrow \mathbb{R}^{+}$ is a nondecreasing
and upper semicontinuous function such that $\varphi \left( t\right) <t$ for
all $t>0$. Then there exist $x^{\ast },y^{\ast },z^{\ast }\in A$ such that,$%
\left\{ 
\begin{array}{c}
T_{1}\left( x^{\ast },y^{\ast },z^{\ast }\right) =x^{\ast } \\ 
T_{2}\left( x^{\ast },y^{\ast },z^{\ast }\right) =y^{\ast } \\ 
T_{3}\left( x^{\ast },y^{\ast },z^{\ast }\right) =z^{\ast }%
\end{array}%
\right. .$
\end{theorem}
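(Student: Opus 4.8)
The plan is to deduce this theorem from the abstract tripled fixed point result proved at the beginning of this section, by exhibiting one concrete measure of noncompactness for which the contractive hypothesis (\ref{cndtn}) forces the operator estimate (\ref{cndt}). Since that theorem holds for any measure of noncompactness, I am free to choose the one most compatible with the sup-norm, namely the Hausdorff measure of noncompactness $\chi$ on $BC(\mathbb{R}^{+})$, given by
\[
\chi(S)=\inf\{\varepsilon>0 : S \text{ admits a finite } \varepsilon\text{-net}\}.
\]
This $\chi$ satisfies all the defining axioms of a measure of noncompactness, so it is legitimate here, and the continuity of each $T_{i}$ is already part of the hypothesis (it would in any case follow from $0\leq\varphi(t)<t$, which forces $\varphi(t)\to 0$ as $t\to 0^{+}$).

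The heart of the argument is to show that (\ref{cndtn}) implies
\[
\chi\left(T_{i}\left(X_{1}\times X_{2}\times X_{3}\right)\right)\leq\varphi\left(\max\{\chi(X_{1}),\chi(X_{2}),\chi(X_{3})\}\right)
\]
for all $X_{1},X_{2},X_{3}\subseteq A$ and each $i=1,2,3$, which is precisely (\ref{cndt}) with $\mu=\chi$. To this end I would fix, for each $j$, a number $\varepsilon_{j}>\chi(X_{j})$ together with a finite $\varepsilon_{j}$-net $\{u_{1}^{j},\dots,u_{m_{j}}^{j}\}$ of $X_{j}$. The finite family of triples $(u_{a}^{1},u_{b}^{2},u_{c}^{3})$ then forms, with respect to the max-metric, an $\varepsilon$-net of $X_{1}\times X_{2}\times X_{3}$ for $\varepsilon=\max\{\varepsilon_{1},\varepsilon_{2},\varepsilon_{3}\}$: given $(x,y,z)$ in the product one chooses indices with $\|x-u_{a}^{1}\|_{\infty}\leq\varepsilon_{1}$, $\|y-u_{b}^{2}\|_{\infty}\leq\varepsilon_{2}$, $\|z-u_{c}^{3}\|_{\infty}\leq\varepsilon_{3}$. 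Applying (\ref{cndtn}) and the monotonicity of $\varphi$ yields
\[
\|T_{i}(x,y,z)-T_{i}(u_{a}^{1},u_{b}^{2},u_{c}^{3})\|_{\infty}\leq\varphi\left(\max\{\|x-u_{a}^{1}\|_{\infty},\|y-u_{b}^{2}\|_{\infty},\|z-u_{c}^{3}\|_{\infty}\}\right)\leq\varphi(\varepsilon),
\]
so the finite set $\{T_{i}(u_{a}^{1},u_{b}^{2},u_{c}^{3})\}$ is a $\varphi(\varepsilon)$-net of $T_{i}(X_{1}\times X_{2}\times X_{3})$, whence $\chi(T_{i}(X_{1}\times X_{2}\times X_{3}))\leq\varphi(\varepsilon)$.

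It then remains to let $\varepsilon_{j}\downarrow\chi(X_{j})$, so that $\varepsilon\downarrow\max\{\chi(X_{1}),\chi(X_{2}),\chi(X_{3})\}=:t_{0}$. Here I would invoke the fact that a nondecreasing upper semicontinuous function is right-continuous: monotonicity makes the limit $\lim_{\varepsilon\downarrow t_{0}}\varphi(\varepsilon)$ exist and dominate $\varphi(t_{0})$, while upper semicontinuity bounds it above by $\varphi(t_{0})$, giving $\lim_{\varepsilon\downarrow t_{0}}\varphi(\varepsilon)=\varphi(t_{0})$. Passing to the limit in the displayed inequality produces exactly (\ref{cndt}). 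With this in hand, the first theorem of this section applies verbatim and delivers $x^{\ast},y^{\ast},z^{\ast}\in A$ with $T_{1}(x^{\ast},y^{\ast},z^{\ast})=x^{\ast}$, $T_{2}(x^{\ast},y^{\ast},z^{\ast})=y^{\ast}$, and $T_{3}(x^{\ast},y^{\ast},z^{\ast})=z^{\ast}$.

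I expect the only delicate point to be this last limiting step, where the interplay between monotonicity and upper semicontinuity of $\varphi$ is essential: without right-continuity one could only conclude $\chi(T_{i}(\cdot))\leq\inf_{\varepsilon>t_{0}}\varphi(\varepsilon)$, which a priori might exceed $\varphi(t_{0})$ and break the application of the abstract theorem. Everything else — the product net construction and the verification that $\chi$ is a measure of noncompactness — is routine.
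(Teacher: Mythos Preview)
Your approach is correct and genuinely different from the paper's. The paper works with the $BC(\mathbb{R}^{+})$-specific measure of noncompactness
\[
\mu(X)=\omega_{0}(X)+\limsup_{t\to\infty}\operatorname{diam}X(t)
\]
and verifies (\ref{cndt}) by separately estimating the modulus-of-continuity piece and the asymptotic-diameter piece from (\ref{cndtn}), then combining them. Your argument instead selects a universal measure and runs a direct covering/net estimate; it never uses anything particular to $BC(\mathbb{R}^{+})$ and in fact proves the statement in an arbitrary Banach space. This is more elementary and more general, while the paper's choice of $\mu$ is tailored to the application in Section~3.

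One small technical point deserves attention: for the Hausdorff measure $\chi$, a finite $\varepsilon_{j}$-net of $X_{j}$ need not consist of points of $A$, so the expression $T_{i}(u_{a}^{1},u_{b}^{2},u_{c}^{3})$ may be undefined and (\ref{cndtn}) inapplicable. The cleanest repair is to run the identical argument with the Kuratowski measure $\alpha$ instead: cover each $X_{j}$ by finitely many subsets $S^{j}_{k}\subseteq X_{j}$ of diameter at most $\varepsilon_{j}>\alpha(X_{j})$; then for any two points of $S^{1}_{a}\times S^{2}_{b}\times S^{3}_{c}$ condition (\ref{cndtn}) bounds the distance of their $T_{i}$-images by $\varphi(\varepsilon)$ with $\varepsilon=\max_{j}\varepsilon_{j}$, so each $T_{i}(S^{1}_{a}\times S^{2}_{b}\times S^{3}_{c})$ has diameter at most $\varphi(\varepsilon)$ and hence $\alpha\bigl(T_{i}(X_{1}\times X_{2}\times X_{3})\bigr)\le\varphi(\varepsilon)$. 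Your right-continuity argument for $\varphi$ then finishes exactly as written. With this adjustment the proof is complete.
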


\textsc{Proof. }To verify that the operator $T_{i}:A\times A\times
A\rightarrow A$ satisfy the condition $\left( \text{\ref{cndt}}\right) $ we
recall the following notions.

The measure of noncompactness on $BC\left( \mathbb{R}^{+}\right) $ for a
positive fixed $t$ on $\mathcal{B}_{BC\left( \mathbb{R}^{+}\right) }$\ is
defined as follows:%
\begin{equation*}
\mu \left( X\right) =\omega _{0}\left( X\right) +\lim \sup_{t\rightarrow
\infty }diamX\left( t\right) ,
\end{equation*}%
that is, $diamX\left( t\right) =\sup \left\{ \left\vert x\left( t\right)
-y\left( t\right) \right\vert :x,y\in X\right\} ,$ $X\left( t\right)
=\left\{ x\left( t\right) :x\in X\right\} .$and 
\begin{equation*}
\omega _{0}\left( X\right) =\lim_{K\rightarrow \infty }\omega _{0}^{K}\left(
X\right) ,
\end{equation*}%
\begin{equation*}
\omega _{0}^{K}\left( X\right) =\lim_{\epsilon \rightarrow 0}\omega
^{K}\left( X,\epsilon \right) ,
\end{equation*}%
\begin{equation*}
\omega ^{K}\left( X,\epsilon \right) =\sup \left\{ \omega ^{K}\left(
x,\epsilon \right) :x\in X\right\} ,
\end{equation*}%
\begin{equation*}
\omega ^{K}\left( x,\epsilon \right) =\sup \left\{ \left\vert x\left(
t\right) -x\left( s\right) \right\vert :t,s\in \left[ 0,K\right] ,\text{ }%
\left\vert t-s\right\vert \leqslant \epsilon \right\} \text{, for }K>0,
\end{equation*}%
where $\omega ^{K}\left( x,\epsilon \right) $ for $x\in X$ and $\epsilon >0,$
is the modulus of continuity of $x$ on the compact $\left[ 0,K\right] $,
where $K$ is a positive number.

We have%
\begin{equation*}
\left\Vert T_{i}\left( x,y,z\right) \left( t\right) -T_{i}\left(
x,y,z\right) \left( s\right) \right\Vert \leqslant \varphi \left( \max
\left\{ \left\Vert x\left( t\right) -x\left( s\right) \right\Vert
,\left\Vert y\left( t\right) -y\left( s\right) \right\Vert ,\left\Vert
z\left( t\right) -z\left( s\right) \right\Vert \right\} \right) ,
\end{equation*}%
by taking the supremum and using the fact that $\varphi $ is nondecreasing,
we get%
\begin{equation*}
\omega ^{K}\left( T_{i}\left( x,y,z\right) ,\epsilon \right) \leqslant
\varphi \left( \max \left\{ \omega ^{K}\left( x,\epsilon \right) ,\omega
^{K}\left( y,\epsilon \right) ,\omega ^{K}\left( z,\epsilon \right) \right\}
\right) .
\end{equation*}%
Thus,%
\begin{equation}
\omega _{0}\left( T_{i}\left( X_{1}\times X_{2}\times X_{3}\right) \right)
\leqslant \varphi \left( \max \left\{ \omega _{0}\left( X_{1}\right) ,\omega
_{0}\left( X_{2}\right) ,\omega _{0}\left( X_{3}\right) \right\} \right) .
\label{modulos}
\end{equation}%
Since in $\left( \text{\ref{cndtn}}\right) $ $x,y$ and $z$ are arbitrary and 
$\varphi $ is non-decreasing,%
\begin{equation*}
DiamT_{i}\left( X_{1}\times X_{2}\times X_{3}\right) \left( t\right)
\leqslant \varphi \left( \max \left\{ DiamX_{1}\left( t\right) ,\text{ }%
DiamX_{2}\left( t\right) ,DiamX_{3}\left( t\right) \right\} \right) .
\end{equation*}%
In further, $X_{1}\left( t\right) ,X_{2}\left( t\right) ,X_{3}\left(
t\right) $ are subspaces of $BC\left( \mathbb{R}_{+}\right) $. Then,%
\begin{multline*}
\lim \sup_{t\rightarrow \infty }DiamT_{i}\left( X_{1}\times X_{2}\times
X_{3}\right) \left( t\right) \leqslant \lim \sup_{t\rightarrow \infty
}\varphi \left( \max \left\{ DiamX_{1}\left( t\right) ,\text{ }%
DiamX_{2}\left( t\right) ,DiamX_{3}\left( t\right) \right\} \right) +\Phi
\left( \epsilon \right) \\
\leqslant \varphi \left( \max \left\{ \lim \sup_{t\rightarrow \infty
}DiamX_{1}\left( t\right) ,\lim \sup_{t\rightarrow \infty }\text{ }%
DiamX_{2}\left( t\right) ,\lim \sup_{t\rightarrow \infty }DiamX_{3}\left(
t\right) \right\} \right) .
\end{multline*}%
Using $\varphi \left( t\right) <t$ for all $t>0$ and from $\left( \text{\ref%
{modulos}}\right) $ and the above inequality, we get%
\begin{equation*}
\mu \left( T_{i}\left( X_{1}\times X_{2}\times X_{3}\right) \right)
\leqslant \varphi \left( \max \left( \mu \left( X_{1}\right) ,\mu \left(
X_{2}\right) ,\mu \left( X_{3}\right) \right) \right) ,\text{ }%
X_{1},X_{2},X_{3}\in A.
\end{equation*}%
Consequently, there exist $x^{\ast },y^{\ast },z^{\ast }\in A$ such that%
\begin{eqnarray*}
T\left( x^{\ast },y^{\ast },z^{\ast }\right) &=&\left( T_{1}\left( x^{\ast
},y^{\ast },z^{\ast }\right) ,T_{2}\left( x^{\ast },y^{\ast },z^{\ast
}\right) ,T_{3}\left( x^{\ast },y^{\ast },z^{\ast }\right) \right) \\
&=&\left( x^{\ast },y^{\ast },z^{\ast }\right) .
\end{eqnarray*}%
Thus, 
\begin{equation*}
\left\{ 
\begin{array}{c}
T_{1}\left( x^{\ast },y^{\ast },z^{\ast }\right) =x^{\ast } \\ 
T_{2}\left( x^{\ast },y^{\ast },z^{\ast }\right) =y^{\ast } \\ 
T_{3}\left( x^{\ast },y^{\ast },z^{\ast }\right) =z^{\ast }%
\end{array}%
\right. .
\end{equation*}

\section{APPLICATION}

Now, we will use the results of the previous section to resolve the
following system%
\begin{equation}
\left\{ 
\begin{array}{c}
x\left( t\right) =g_{1}\left( t\right) +f_{1}\left( t,x\left( \theta
_{1}\left( t\right) \right) ,y\left( \theta _{1}\left( t\right) \right)
,z\left( \theta _{1}\left( t\right) \right) ,\varphi \left(
\int_{0}^{q_{1}\left( t\right) }h\left( t,s,x\left( \eta _{1}\left( s\right)
\right) ,y\left( \eta _{1}\left( s\right) \right) ,z\left( \eta _{1}\left(
s\right) \right) \right) ds\right) \right) \\ 
y\left( t\right) =g_{2}\left( t\right) +f_{2}\left( t,x\left( \theta
_{2}\left( t\right) \right) ,y\left( \theta _{2}\left( t\right) \right)
,z\left( \theta _{2}\left( t\right) \right) ,\varphi \left(
\int_{0}^{q_{2}\left( t\right) }h\left( t,s,x\left( \eta _{2}\left( s\right)
\right) ,y\left( \eta _{2}\left( s\right) \right) ,z\left( \eta _{2}\left(
s\right) \right) \right) ds\right) \right) \\ 
z\left( t\right) =g_{3}\left( t\right) +f_{3}\left( t,x\left( \theta
_{3}\left( t\right) \right) ,y\left( \theta _{3}\left( t\right) \right)
,z\left( \theta _{3}\left( t\right) \right) ,\varphi \left(
\int_{0}^{q_{3}\left( t\right) }h\left( t,s,x\left( \eta _{3}\left( s\right)
\right) ,y\left( \eta _{3}\left( s\right) \right) ,z\left( \eta _{3}\left(
s\right) \right) \right) ds\right) \right)%
\end{array}%
\right. ,  \label{sys}
\end{equation}

\bigskip

\qquad We study system $\left( \text{\ref{sys}}\right) $ under the following
assumptions:

\begin{enumerate}
\item[$\left( i\right) $] $\xi _{i},\eta _{i},q_{i}:\mathbb{R}%
_{+}\rightarrow \mathbb{R}_{+},$ $\left( i=1,2,3\right) ,$ are continuous
and $\xi _{i}\left( t\right) \rightarrow \infty $ as $t\rightarrow \infty .$

\item[$\left( ii\right) $] The function $\psi _{i}:\mathbb{R\rightarrow R}$, 
$\left( i=1,2,3\right) ,$ is continuous and there exist positive $\delta
_{i},\alpha _{i}$\ such that 
\begin{equation*}
\left\vert \psi _{i}\left( t_{1}\right) -\psi _{i}\left( t_{2}\right)
\right\vert \leqslant \delta _{i}\left\vert t_{1}-t_{2}\right\vert ^{\alpha
_{i}},
\end{equation*}%
for $i=1,2,3$ and\ any $t_{1},t_{2}\in \mathbb{R}_{+}$.

\item[$\left( iii\right) $] $f_{i}:\mathbb{R}_{+}\times \mathbb{R\times
R\times R\times R\rightarrow R}$ are continuous, $g_{i}:\mathbb{R}_{+}%
\mathbb{\rightarrow R}$ are bounded and there exists nondecreasing
continuous function $\Phi _{i}:\mathbb{R}_{+}\mathbb{\rightarrow R}_{+}$
with $\Phi _{i}\left( 0\right) =0,$ $i=1,2,3,$ such that%
\begin{equation*}
\left\vert f_{i}\left( t,x_{1},x_{2},x_{3},x_{4}\right) -f_{i}\left(
t,y_{1},y_{2},y_{3},y_{4}\right) \right\vert \leqslant \left( \varphi
_{i}\left( \max \left\{ \left\vert x_{1}-y_{1}\right\vert ,\left\vert
x_{2}-y_{2}\right\vert ,\left\vert x_{3}-y_{3}\right\vert \right\} \right)
\right) +\Phi _{i}\left( \left\vert x_{4}-y_{4}\right\vert \right) .
\end{equation*}

\item[$\left( iv\right) $] The functions defined by $\left\vert f_{i}\left(
t,0,0,0,0\right) \right\vert ,$\ $i=1,2,3$ are bounded on $\mathbb{R}_{+}$,
i.e.,%
\begin{equation}
M_{i}=\sup \left\{ f_{i}\left( t,0,0,0,0\right) :t\in \mathbb{R}_{+}\right\}
<\infty .  \label{M}
\end{equation}

\item[$\left( v\right) $] $h_{i}:\mathbb{R}_{+}\mathbb{\times R}_{+}\mathbb{%
\times R\times R\times R\rightarrow R}$ , are continuous functions and there
exists a positive constant $D$ such that $i=1,2,3,$%
\begin{equation}
\sup \left\{ \left\vert \int_{0}^{q_{i}\left( t\right) }h_{i}\left(
t,s,x\left( \eta \left( s\right) \right) ,y\left( \eta \left( s\right)
\right) ,z\left( \eta \left( s\right) \right) \right) ds\right\vert :\text{ }%
t,s\in \mathbb{R}_{+},\text{ }x,y,z\in BC\left( \mathbb{R}_{+}\right)
\right\} <D,  \label{D}
\end{equation}%
and%
\begin{equation}
\lim_{t\rightarrow \infty }\int_{0}^{q_{i}\left( t\right) }\left[
h_{i}\left( t,s,x\left( \eta \left( s\right) \right) ,y\left( \eta \left(
s\right) \right) ,z\left( \eta \left( s\right) \right) \right) -h_{i}\left(
t,s,u\left( \eta \left( s\right) \right) ,v\left( \eta \left( s\right)
\right) ,w\left( \eta \left( s\right) \right) \right) \right] ds=0,
\label{limh}
\end{equation}%
with respect to $x,y,z,u,v,w\in BC\left( \mathbb{R}_{+}\right) .$
\end{enumerate}

Consider the following operator, 
\begin{equation*}
T_{i}\left( x,y,z\right) =g_{i}\left( t\right) +f_{i}\left( t,x\left( \xi
_{i}\left( t\right) \right) ,y\left( \xi _{i}\left( t\right) \right)
,z\left( \xi _{i}\left( t\right) \right) ,\psi \left( \int_{0}^{q_{i}\left(
t\right) }h\left( t,s,x\left( \eta _{i}\left( s\right) \right) ,y\left( \eta
_{i}\left( s\right) \right) ,z\left( \eta _{i}\left( s\right) \right)
\right) ds\right) \right) .
\end{equation*}%
Solving the system $\left( \text{\ref{sys}}\right) $ is equivalent to find
the fixed points of the operator $T_{i}$. Then let verify the conditions of
Theorem \ref{main}$.$

First, since $g_{i}$ and $f_{i}$\ $\left( i=1,2,3\right) $ are continuous
then the operators $T_{i}$ are continuous.

In further, for $x,y,z\in B_{r}$ $\left( r\right) $ $\left( \text{for }%
r>0\right) $ let,%
\begin{eqnarray*}
&&\left\Vert T_{i}\left( x,y,z\right) \left( t\right) \right\Vert \\
&=&\left\Vert g_{i}\left( t\right) +f_{i}\left( t,x\left( \xi _{i}\left(
t\right) \right) ,y\left( \xi _{i}\left( t\right) \right) ,z\left( \xi
_{i}\left( t\right) \right) ,\psi \left( \int_{0}^{q_{i}\left( t\right)
}h\left( t,s,x\left( \eta _{i}\left( s\right) \right) ,y\left( \eta
_{i}\left( s\right) \right) ,z\left( \eta _{i}\left( s\right) \right)
\right) ds\right) \right) \right\Vert \\
&\leqslant &\left\Vert f_{i}\left( t,x\left( \xi _{i}\left( t\right) \right)
,y\left( \xi _{i}\left( t\right) \right) ,z\left( \xi _{i}\left( t\right)
\right) ,\psi \left( \int_{0}^{q_{i}\left( t\right) }h\left( t,s,x\left(
\eta _{i}\left( s\right) \right) ,y\left( \eta _{i}\left( s\right) \right)
,z\left( \eta _{i}\left( s\right) \right) \right) ds\right) \right) \right.
\\
&&\left. -f\left( t,0,0,0\right) +f\left( t,0,0,0\right) \right\Vert
+\left\Vert g_{i}\left( t\right) \right\Vert \\
&\leqslant &\left\Vert g_{i}\left( t\right) \right\Vert +\left\Vert f\left(
t,0,0,0\right) \right\Vert \\
&&+\varphi _{i}\left( \max \left\{ \left\vert x\left( \xi _{i}\left(
t\right) \right) \right\vert ,\left\vert y\left( \xi _{i}\left( t\right)
\right) \right\vert ,\left\vert z\left( \xi _{i}\left( t\right) \right)
\right\vert \right\} \right) \\
&&+\Phi _{i}\left( \psi \left( \int_{0}^{q_{i}\left( t\right) }h\left(
t,s,x\left( \eta _{i}\left( s\right) \right) ,y\left( \eta _{i}\left(
s\right) \right) ,z\left( \eta _{i}\left( s\right) \right) \right) ds\right)
\right) .
\end{eqnarray*}%
Since, $g_{i}$ are bounded, $f_{i}$ are continuous functions and using
hypothesis $\left( iv\right) $-$\left( v\right) ,$ we get%
\begin{eqnarray*}
\left\Vert T_{i}\left( x,y,z\right) \right\Vert _{\infty } &\leqslant
&\varphi _{i}\left( \max \left\{ \left\Vert x\right\Vert _{\infty
},\left\Vert y\right\Vert _{\infty },\left\Vert z\right\Vert _{\infty
}\right\} \right) +G+M_{i}+\Phi _{i}\left( \delta _{i}D^{\alpha _{i}}\right)
\\
&\leqslant &\varphi _{i}\left( r\right) +G+M_{i}+\Phi _{i}\left( \delta
_{i}D^{\alpha _{i}}\right) ,
\end{eqnarray*}%
for some $r_{0}\geqslant 0,$ we obtain $T_{i}\left( B_{r_{0}}\times
B_{r_{0}}\times B_{r_{0}}\right) \subset B_{r_{0}}.$

Moreover,

\begin{eqnarray*}
&&\left\Vert T_{i}\left( x,y,z\right) -T_{i}\left( u,v,w\right) \right\Vert
_{\infty } \\
&=&\sup_{t}\left\Vert g_{i}\left( t\right) +f_{i}\left( t,x\left( \xi
_{i}\left( t\right) \right) ,y\left( \xi _{i}\left( t\right) \right)
,z\left( \xi _{i}\left( t\right) \right) ,\psi \left( \int_{0}^{q_{i}\left(
t\right) }h\left( t,s,x\left( \eta _{i}\left( s\right) \right) ,y\left( \eta
_{i}\left( s\right) \right) ,z\left( \eta _{i}\left( s\right) \right)
\right) ds\right) \right) \right. \\
&&\left. -g_{i}\left( t\right) -f_{i}\left( t,u\left( \xi _{i}\left(
t\right) \right) ,v\left( \xi _{i}\left( t\right) \right) ,w\left( \xi
_{i}\left( t\right) \right) ,\psi \left( \int_{0}^{q_{i}\left( t\right)
}h\left( t,s,u\left( \eta _{i}\left( s\right) \right) ,v\left( \eta
_{i}\left( s\right) \right) ,w\left( \eta _{i}\left( s\right) \right)
\right) ds\right) \right) \right\Vert \\
&=&\sup_{t}\left\Vert f_{i}\left( t,x\left( \xi _{i}\left( t\right) \right)
,y\left( \xi _{i}\left( t\right) \right) ,z\left( \xi _{i}\left( t\right)
\right) ,\psi \left( \int_{0}^{q_{i}\left( t\right) }h\left( t,s,x\left(
\eta _{i}\left( s\right) \right) ,y\left( \eta _{i}\left( s\right) \right)
,z\left( \eta _{i}\left( s\right) \right) \right) ds\right) \right) \right.
\\
&&\left. -f_{i}\left( t,u\left( \xi _{i}\left( t\right) \right) ,v\left( \xi
_{i}\left( t\right) \right) ,w\left( \xi _{i}\left( t\right) \right) ,\psi
\left( \int_{0}^{q_{i}\left( t\right) }h\left( t,s,u\left( \eta _{i}\left(
s\right) \right) ,v\left( \eta _{i}\left( s\right) \right) ,w\left( \eta
_{i}\left( s\right) \right) \right) ds\right) \right) \right\Vert \\
&\leqslant &\sup_{t}\left\{ \varphi _{i}\left( \max \left\{ \left\vert
x\left( \xi _{i}\left( t\right) \right) -u\left( \xi _{i}\left( t\right)
\right) \right\vert ,\left\vert y\left( \xi _{i}\left( t\right) \right)
-v\left( \xi _{i}\left( t\right) \right) \right\vert ,\left\vert z\left( \xi
_{i}\left( t\right) \right) -w\left( \xi _{i}\left( t\right) \right)
\right\vert \right\} \right) \right. \\
&&\left. +\Phi _{i}\left( \left\vert 
\begin{array}{c}
\psi \left( \int_{0}^{q_{i}\left( t\right) }h\left( t,s,x\left( \eta
_{i}\left( s\right) \right) ,y\left( \eta _{i}\left( s\right) \right)
,z\left( \eta _{i}\left( s\right) \right) \right) ds\right) \\ 
-\psi \left( \int_{0}^{q_{i}\left( t\right) }h\left( t,s,u\left( \eta
_{i}\left( s\right) \right) ,v\left( \eta _{i}\left( s\right) \right)
,w\left( \eta _{i}\left( s\right) \right) \right) ds\right)%
\end{array}%
\right\vert \right) \right\} \\
&\leqslant &\varphi _{i}\left( \max \left\{ \left\Vert x-u\right\Vert
_{\infty },\left\Vert y-v\right\Vert _{\infty },\left\Vert z-w\right\Vert
_{\infty }\right\} \right) \\
&&+\sup_{t}\Phi _{i}\left( \delta _{i}\left\vert \int_{0}^{q_{i}\left(
t\right) }\left\{ 
\begin{array}{c}
h\left( t,s,x\left( \eta _{i}\left( s\right) \right) ,y\left( \eta
_{i}\left( s\right) \right) ,z\left( \eta _{i}\left( s\right) \right) \right)
\\ 
-h\left( t,s,u\left( \eta _{i}\left( s\right) \right) ,v\left( \eta
_{i}\left( s\right) \right) ,w\left( \eta _{i}\left( s\right) \right) \right)%
\end{array}%
\right\} ds\right\vert ^{\alpha _{i}}\right) .
\end{eqnarray*}%
Consider,%
\begin{equation*}
\left\vert \int_{0}^{q_{i}\left( t\right) }\left\{ h\left( t,s,x\left( \eta
_{i}\left( s\right) \right) ,y\left( \eta _{i}\left( s\right) \right)
,z\left( \eta _{i}\left( s\right) \right) \right) -h\left( t,s,u\left( \eta
_{i}\left( s\right) \right) ,v\left( \eta _{i}\left( s\right) \right)
,w\left( \eta _{i}\left( s\right) \right) \right) \right\} ds\right\vert .
\end{equation*}%
Using the condition $\left( \text{\ref{limh}}\right) ,$ we get%
\begin{equation*}
\left\vert \int_{0}^{q_{i}\left( t\right) }\left\{ h\left( t,s,x\left( \eta
_{i}\left( s\right) \right) ,y\left( \eta _{i}\left( s\right) \right)
,z\left( \eta _{i}\left( s\right) \right) \right) -h\left( t,s,u\left( \eta
_{i}\left( s\right) \right) ,v\left( \eta _{i}\left( s\right) \right)
,w\left( \eta _{i}\left( s\right) \right) \right) \right\} ds\right\vert
\leqslant \epsilon
\end{equation*}%
and%
\begin{equation*}
\delta _{i}\left\vert \int_{0}^{q_{i}\left( t\right) }\left\{ h\left(
t,s,x\left( \eta _{i}\left( s\right) \right) ,y\left( \eta _{i}\left(
s\right) \right) ,z\left( \eta _{i}\left( s\right) \right) \right) -h\left(
t,s,u\left( \eta _{i}\left( s\right) \right) ,v\left( \eta _{i}\left(
s\right) \right) ,w\left( \eta _{i}\left( s\right) \right) \right) \right\}
ds\right\vert ^{\alpha _{i}}\leqslant \delta _{i}\epsilon ^{\alpha _{i}}.
\end{equation*}%
Thus,%
\begin{equation*}
\Phi _{i}\left( \delta _{i}\left\vert \int_{0}^{q_{i}\left( t\right)
}\left\{ 
\begin{array}{c}
h\left( t,s,x\left( \eta _{i}\left( s\right) \right) ,y\left( \eta
_{i}\left( s\right) \right) ,z\left( \eta _{i}\left( s\right) \right) \right)
\\ 
-h\left( t,s,u\left( \eta _{i}\left( s\right) \right) ,v\left( \eta
_{i}\left( s\right) \right) ,w\left( \eta _{i}\left( s\right) \right) \right)%
\end{array}%
\right\} ds\right\vert ^{\alpha _{i}}\right) \leqslant \Phi _{i}\left(
\delta _{i}\epsilon ^{\alpha _{i}}\right) .
\end{equation*}%
On the other hand $\Phi _{i}$ is continuous function and$\ \Phi _{i}\left(
0\right) =0$ , $\epsilon $ is arbitrary, then for $\epsilon \rightarrow 0,$
we get 
\begin{equation*}
\left\Vert T_{i}\left( x,y,z\right) -T_{i}\left( u,v,w\right) \right\Vert
_{\infty }\leqslant \varphi _{i}\left( \max \left\{ \left\Vert
x-u\right\Vert _{\infty },\left\Vert y-v\right\Vert _{\infty },\left\Vert
z-w\right\Vert _{\infty }\right\} \right) .
\end{equation*}%
Consequently by Theorem \ref{main},\ there exist $x^{\ast },y^{\ast
},z^{\ast }$ such that%
\begin{equation*}
\left\{ 
\begin{array}{c}
T_{1}\left( x^{\ast },y^{\ast },z^{\ast }\right) =x^{\ast } \\ 
T_{2}\left( x^{\ast },y^{\ast },z^{\ast }\right) =y^{\ast } \\ 
T_{3}\left( x^{\ast },y^{\ast },z^{\ast }\right) =z^{\ast }%
\end{array}%
\right. .
\end{equation*}%
Then, we had proved the following theorem.

\begin{theorem}
Under the conditions $\left( i\right) -\left( v\right) $ the system of
integral equations $\left( \text{\ref{sys}}\right) $ has at least one
solution in the space $BC\left( \mathbb{R}_{+}\right) \times BC\left( 
\mathbb{R}_{+}\right) \times BC\left( \mathbb{R}_{+}\right) $.
\end{theorem}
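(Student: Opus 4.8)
The plan is to recast the system (\ref{sys}) as a common fixed-point problem for the three superposition operators $T_i$ introduced above and then invoke Theorem \ref{main}. A triple $(x^\ast,y^\ast,z^\ast)$ solves (\ref{sys}) precisely when $T_1(x^\ast,y^\ast,z^\ast)=x^\ast$, $T_2(x^\ast,y^\ast,z^\ast)=y^\ast$ and $T_3(x^\ast,y^\ast,z^\ast)=z^\ast$, so it suffices to produce a nonempty, bounded, closed and convex set $A\subset BC(\mathbb{R}_+)$ on which the hypotheses of Theorem \ref{main} are met (with $\varphi=\max_i\varphi_i$, which is again nondecreasing, upper semicontinuous and satisfies $\varphi(t)<t$, so that each $T_i$ inherits the required estimate from its own $\varphi_i$).

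I would then verify the three structural requirements in turn. Continuity of each $T_i$ on $BC(\mathbb{R}_+)^3$ follows from the continuity of $g_i$, $f_i$, $h_i$, $\psi_i$ and of the shifts $\xi_i,\eta_i,q_i$ assumed in (i)--(ii). To locate an invariant set I would estimate $\|T_i(x,y,z)\|_\infty$ for $(x,y,z)$ in a ball $B_r$: splitting off $f_i(t,0,0,0,0)$, applying the growth bound in (iii), the boundedness constants $M_i$ from (\ref{M}) and $G=\sup_i\sup_t|g_i(t)|$, the uniform integral bound $D$ from (\ref{D}) and the power estimate for $\psi_i$ in (ii), one reaches
\[
\|T_i(x,y,z)\|_\infty \le \varphi_i(r) + G + M_i + \Phi_i(\delta_i D^{\alpha_i}).
\]
Choosing $r_0$ so large that $\varphi_i(r_0)+G+M_i+\Phi_i(\delta_i D^{\alpha_i})\le r_0$ for each $i$ and setting $A=B_{r_0}$ then gives $T_i(A\times A\times A)\subseteq A$.

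The core of the argument is the contractive condition (\ref{cndtn}). Expanding $\|T_i(x,y,z)-T_i(u,v,w)\|_\infty$ and cancelling the common term $g_i(t)$, the Lipschitz-type hypothesis (iii) separates the bound into a $\varphi_i$-part controlled by $\max\{\|x-u\|_\infty,\|y-v\|_\infty,\|z-w\|_\infty\}$ and a $\Phi_i$-part controlled by the $\psi_i$-image of the $h$-integral. For the latter I would use the power bound in (ii) and then invoke (\ref{limh}): once the integral difference is below $\epsilon$ the $\Phi_i$-part is at most $\Phi_i(\delta_i\epsilon^{\alpha_i})$, and letting $\epsilon\to0$, with $\Phi_i$ continuous and $\Phi_i(0)=0$, annihilates it. This produces exactly (\ref{cndtn}), and Theorem \ref{main} supplies the common fixed point, i.e. a solution of (\ref{sys}) in $BC(\mathbb{R}_+)^3$.

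I expect two points to demand the most care. First, the invariance step tacitly requires the additive constant $G+M_i+\Phi_i(\delta_i D^{\alpha_i})$ to be absorbable by some $r_0$; the bare assumption $\varphi_i(t)<t$ does not guarantee this (consider $\varphi_i(t)=t-e^{-t}$), so I would either strengthen the condition on $\varphi_i$ so that $t-\varphi_i(t)$ eventually exceeds that constant, or argue directly that such an $r_0$ exists. Second, (\ref{limh}) only asserts that the integral difference vanishes as $t\to\infty$, whereas the estimate for (\ref{cndtn}) needs control of the supremum over all $t\ge0$; making the ``$\epsilon\to0$'' step legitimate therefore calls for a uniform-in-$t$ form of (\ref{limh}), or a split into a finite horizon (handled by continuity of $h_i$ and equicontinuity) and a tail (handled by (\ref{limh})). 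This uniformity is the step I would scrutinize most closely.
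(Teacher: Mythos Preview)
Your proposal follows essentially the same route as the paper: recast the system as a fixed-point problem for the operators $T_i$, verify continuity, obtain a self-map on a ball $B_{r_0}$ via the growth estimate $\|T_i(x,y,z)\|_\infty\le\varphi_i(r)+G+M_i+\Phi_i(\delta_i D^{\alpha_i})$, and derive (\ref{cndtn}) by applying (iii), the power bound (ii), and then (\ref{limh}) to annihilate the $\Phi_i$-term before invoking Theorem \ref{main}. The two reservations you flag are well taken and in fact apply equally to the paper's own argument, which simply asserts the existence of a suitable $r_0$ and treats (\ref{limh}) as though it furnished uniform-in-$t$ smallness rather than merely asymptotic vanishing; so your outline is faithful to the paper, and your caution about these two steps is justified.
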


\begin{example}
Let the system of integral equations%
\begin{equation*}
\left\{ 
\begin{array}{l}
x\left( t\right) =\frac{t^{2}}{2+2t^{4}}+\frac{x\left( \sqrt{t}\right)
+y\left( \sqrt{t}\right) +z\left( \sqrt{t}\right) }{3t^{2}+3}+\arctan
\int_{0}^{\sqrt{t}}\frac{x\left( s^{2}\right) s\left\vert \sin y\left(
s^{2}\right) \right\vert \left\vert \cos z\left( s^{2}\right) \right\vert }{%
e^{t}\left( 1+x^{2}\left( s^{2}\right) \right) \left( 1+\sin ^{2}y\left(
s^{2}\right) \right) \left( 1+\cos ^{2}z\left( s^{2}\right) \right) }ds \\ 
y\left( t\right) =\frac{1}{2}e^{-t^{2}}+\frac{t^{2}\left( x\left( t\right)
+y\left( t\right) +z\left( t\right) \right) }{3t^{4}+3}+\sin \int_{0}^{t}%
\frac{e^{s}y^{2}\left( s\right) \left( 1+\cos ^{2}x\left( s\right) \right)
\left( 1+\sin ^{2}z\left( s\right) \right) }{e^{t^{2}}\left( 1+y^{2}\left(
s\right) \right) \left( 1+\sin ^{2}x\left( s\right) \right) \left( 1+\cos
^{2}z\left( s\right) \right) }ds \\ 
z\left( t\right) =\frac{1}{2\sqrt{1+t^{4}}}+\frac{t^{3}\left( x\left(
t\right) +y\left( t\right) +z\left( t\right) \right) }{3t^{5}+3}+\cos
\int_{0}^{t^{2}}\frac{s^{2}\left\vert \cos z\left( s\right) \right\vert +%
\sqrt{e^{s}\left( 1+z^{2}\left( s\right) \right) \left( 1+\sin ^{2}y\left(
s\right) \right) \left( 1+\cos ^{2}x\left( s\right) \right) }}{e^{t}\left(
1+z^{2}\left( s\right) \right) \left( 1+\sin ^{2}y\left( s\right) \right)
\left( 1+\cos ^{2}x\left( s\right) \right) }ds%
\end{array}%
\right. .
\end{equation*}%
We notice that by taking%
\begin{equation*}
g_{1}\left( t\right) =\frac{t^{2}}{2+2t^{4}}\text{, }g_{2}\left( t\right) =%
\frac{1}{2}e^{-t^{2}},\text{ }g_{3}\left( t\right) =\frac{1}{2\sqrt{1+t^{4}}}%
,
\end{equation*}%
\begin{equation*}
\begin{array}{l}
f_{1}\left( t,x,y,z,p\right) =\frac{x+y+z}{3t^{2}+3}+p \\ 
f_{2}\left( t,x,y,z,p\right) =\frac{t^{2}\left( x+y+z\right) }{3t^{4}+3}+p
\\ 
f_{3}\left( t,x,y,z,p\right) =\frac{t^{3}\left( x+y+z\right) }{3t^{5}+3}+p%
\end{array}%
,
\end{equation*}%
\begin{eqnarray*}
h_{1}\left( t,s,x,y,z\right) &=&\frac{xs\left\vert \sin y\right\vert
\left\vert \cos z\right\vert }{e^{t}\left( 1+x^{2}\right) \left( 1+\sin
^{2}y\right) \left( 1+\cos ^{2}z\right) } \\
h_{2}\left( t,s,x,y,z\right) &=&\frac{e^{s}\left( 1+y^{2}\right) \left(
1+\sin ^{2}x\right) \left( 1+\cos ^{2}z\right) }{e^{t^{2}}\left(
1+y^{2}\right) \left( 1+\sin ^{2}x\right) \left( 1+\cos ^{2}z\right) } \\
h_{3}\left( t,s,x,y,z\right) &=&\frac{s^{2}\left\vert \cos z\right\vert +%
\sqrt{e^{s}\left( 1+z^{2}\right) \left( 1+\sin ^{2}y\right) \left( 1+\cos
^{2}x\right) }}{e^{t}\left( 1+z^{2}\right) \left( 1+\sin ^{2}y\right) \left(
1+\cos ^{2}x\right) }
\end{eqnarray*}%
and%
\begin{eqnarray*}
\eta _{1}\left( t\right) &=&t^{2},\text{ }\eta _{2}\left( t\right) =\eta
_{3}\left( t\right) =t \\
\xi _{1}\left( t\right) &=&\sqrt{t},\text{ }\xi _{2}\left( t\right) =\xi
_{3}\left( t\right) =t \\
q_{1}\left( t\right) &=&\sqrt{t},\text{ }q_{2}\left( t\right) =t,\text{ }%
q_{3}\left( t\right) =t^{2} \\
\Psi _{1}\left( t\right) &=&\arctan t,\text{ }\Psi _{2}\left( t\right) =\sin
t,\text{ }\Psi _{3}\left( t\right) =\cos t,
\end{eqnarray*}%
we get the system of integral equations $\left( \text{\ref{sys}}\right) .$

To solve this system we need to verify the conditions $\left( i\right)
-\left( v\right) $.

Obviously, $\xi _{i},\eta _{i},q_{i}:\mathbb{R}_{+}\rightarrow \mathbb{R}%
_{+} $ are continuous and $\xi ^{i}\rightarrow \infty $ as $t\rightarrow
\infty .$ In further, the functions $\psi _{i}:\mathbb{R\rightarrow R}$ are
continuous for $\delta _{i}=\alpha _{i}=1$,\ we have%
\begin{equation*}
\left\vert \psi _{i}\left( t_{1}\right) -\psi _{i}\left( t_{2}\right)
\right\vert \leqslant \delta _{i}\left\vert t_{1}-t_{2}\right\vert ^{\alpha
_{i}},
\end{equation*}%
for any $t_{1},t_{2}\in \mathbb{R}_{+}$. The conditions $\left( i\right) $
and $\left( ii\right) $ hold.

Now, let%
\begin{eqnarray*}
\left\vert f_{1}\left( t,x,y,z,p\right) -f_{1}\left( t,u,v,w,\rho \right)
\right\vert &=&\left\vert \frac{x+y+z}{3t^{2}+3}+p-\left( \frac{u+v+w}{%
3t^{2}+3}+\rho \right) \right\vert \\
&\leqslant &\frac{1}{3t^{2}+3}\left[ \left\vert x-u\right\vert +\left\vert
y-v\right\vert +\left\vert z-w\right\vert \right] +\left\vert p-\rho
\right\vert \\
&\leqslant &\frac{3}{3t^{2}+3}\max \left\{ \left\vert x-u\right\vert
,\left\vert y-v\right\vert ,\left\vert z-w\right\vert \right\} +\left\vert
p-\rho \right\vert \\
&\leqslant &\frac{1}{t^{2}+1}\max \left\{ \left\vert x-u\right\vert
,\left\vert y-v\right\vert ,\left\vert z-w\right\vert \right\} +\left\vert
p-\rho \right\vert \\
&=&\varphi _{1}\left( \max \left\{ \left\vert x-u\right\vert ,\left\vert
y-v\right\vert ,\left\vert z-w\right\vert \right\} \right) +\Phi \left(
\left\vert p-\rho \right\vert \right) .
\end{eqnarray*}%
Similarly, we prove that%
\begin{equation*}
\left\vert f_{2}\left( t,x,y,z,p\right) -f_{2}\left( t,u,v,w,\rho \right)
\right\vert \leqslant \varphi _{2}\left( \max \left\{ \left\vert
x-u\right\vert ,\left\vert y-v\right\vert ,\left\vert z-w\right\vert
\right\} \right) +\Phi \left( \left\vert p-\rho \right\vert \right)
\end{equation*}%
and%
\begin{equation*}
\left\vert f_{3}\left( t,x,y,z,p\right) -f_{3}\left( t,u,v,w,\rho \right)
\right\vert \leqslant \varphi _{3}\left( \max \left\{ \left\vert
x-u\right\vert ,\left\vert y-v\right\vert ,\left\vert z-w\right\vert
\right\} \right) +\Phi \left( \left\vert p-\rho \right\vert \right) .
\end{equation*}%
Then, $\left( iii\right) $ also holds.

In further $\left( iv\right) $ is valid. Indeed,%
\begin{equation*}
M_{i}=\sup \left\vert \left\{ f_{i}\left( t,0,0,0,0\right) :t\in \mathbb{R}%
_{+}\right\} \right\vert =0,i=1,2,3.
\end{equation*}%
Let us verify the last condition $\left( v\right) .$ First, note that%
\begin{eqnarray*}
&&\left\vert h_{1}\left( t,s,x,y,z\right) -h_{1}\left( t,s,u,v,w\right)
\right\vert \\
&=&\left\vert \frac{xs\left\vert \sin y\right\vert \left\vert \cos
z\right\vert }{e^{t}\left( 1+x^{2}\right) \left( 1+\sin ^{2}y\right) \left(
1+\cos ^{2}z\right) }-\frac{us\left\vert \sin v\right\vert \left\vert \cos
w\right\vert }{e^{t}\left( 1+u^{2}\right) \left( 1+\sin ^{2}v\right) \left(
1+\cos ^{2}w\right) }\right\vert \\
&\leqslant &\left\vert \frac{x}{1+x^{2}}\frac{s}{e^{t}}-\frac{u}{1+u^{2}}%
\frac{s}{e^{t}}\right\vert \leqslant \frac{1}{2}\frac{s}{e^{t}}+\frac{1}{2}%
\frac{s}{e^{t}} \\
&=&\frac{s}{e^{t}}.
\end{eqnarray*}%
Hence,%
\begin{eqnarray*}
&&\lim_{t\rightarrow \infty }\int_{0}^{t}\left\vert h_{1}\left( t,s,x\left(
\eta \left( s\right) \right) ,y\left( \eta \left( s\right) \right) ,z\left(
\eta \left( s\right) \right) \right) -h_{1}\left( t,s,u\left( \eta \left(
s\right) \right) ,v\left( \eta \left( s\right) \right) ,w\left( \eta \left(
s\right) \right) \right) \right\vert ds \\
&\leqslant &\lim_{t\rightarrow \infty }\int_{0}^{t}\frac{s}{e^{t}}ds=0.
\end{eqnarray*}%
In addition,%
\begin{eqnarray*}
&&\left\vert h_{2}\left( t,s,x,y,z\right) -h_{2}\left( t,s,u,v,w\right)
\right\vert \\
&=&\left\vert \frac{e^{s}\left( y^{2}\right) \left( 1+\cos ^{2}x\right)
\left( 1+\sin ^{2}z\right) }{e^{t^{2}}\left( 1+y^{2}\right) \left( 1+\sin
^{2}x\right) \left( 1+\cos ^{2}z\right) }-\frac{e^{s}\left( v^{2}\right)
\left( 1+\cos ^{2}u\right) \left( 1+\sin ^{2}w\right) }{e^{t^{2}}\left(
1+v^{2}\right) \left( 1+\sin ^{2}u\right) \left( 1+\cos ^{2}w\right) }%
\right\vert \\
&\leqslant &\left\vert \frac{y^{2}}{1+y^{2}}\frac{e^{s}}{e^{t^{2}}}-\frac{%
v^{2}}{1+v^{2}}\frac{e^{s}}{e^{t^{2}}}\right\vert \leqslant 2\frac{e^{s}}{%
e^{t^{2}}}.
\end{eqnarray*}%
Thus,%
\begin{eqnarray*}
&&\lim_{t\rightarrow \infty }\int_{0}^{t}\left\vert h_{2}\left( t,s,x\left(
\eta \left( s\right) \right) ,y\left( \eta \left( s\right) \right) ,z\left(
\eta \left( s\right) \right) \right) -h_{2}\left( t,s,u\left( \eta \left(
s\right) \right) ,v\left( \eta \left( s\right) \right) ,w\left( \eta \left(
s\right) \right) \right) \right\vert ds \\
&\leqslant &\lim_{t\rightarrow \infty }\int_{0}^{t}2\frac{e^{s}}{e^{t^{2}}}%
ds=0.
\end{eqnarray*}%
Moreover,%
\begin{eqnarray*}
&&\left\vert h_{3}\left( t,s,x,y,z\right) -h_{3}\left( t,s,u,v,w\right)
\right\vert \\
&=&\left\vert \frac{s^{2}\left\vert \cos z\right\vert +\sqrt{e^{s}\left(
1+z^{2}\right) \left( 1+\sin ^{2}y\right) \left( 1+\cos ^{2}x\right) }}{%
e^{t}\left( 1+z^{2}\right) \left( 1+\sin ^{2}y\right) \left( 1+\cos
^{2}x\right) }-\frac{s^{2}\left\vert \cos w\right\vert +\sqrt{e^{s}\left(
1+w^{2}\right) \left( 1+\sin ^{2}v\right) \left( 1+\cos ^{2}u\right) }}{%
e^{t}\left( 1+w^{2}\right) \left( 1+\sin ^{2}v\right) \left( 1+\cos
^{2}u\right) }\right\vert \\
&\leqslant &\left\vert \frac{s^{2}}{e^{t}}\left( \cos z-\cos w\right)
\right\vert \leqslant \frac{s^{2}}{e^{t}}.
\end{eqnarray*}%
Then,%
\begin{eqnarray*}
&&\lim_{t\rightarrow \infty }\int_{0}^{t}\left\vert h_{2}\left( t,s,x\left(
\eta \left( s\right) \right) ,y\left( \eta \left( s\right) \right) ,z\left(
\eta \left( s\right) \right) \right) -h_{2}\left( t,s,u\left( \eta \left(
s\right) \right) ,v\left( \eta \left( s\right) \right) ,w\left( \eta \left(
s\right) \right) \right) \right\vert ds \\
&\leqslant &\lim_{t\rightarrow \infty }\int_{0}^{t}\frac{s^{2}}{e^{t}}ds=0.
\end{eqnarray*}%
Furthermore, \ for any $x,y,z\in BC\left( \mathbb{R}_{+}\right) \times
BC\left( \mathbb{R}_{+}\right) \times BC\left( \mathbb{R}_{+}\right) ,$%
\begin{equation*}
\sup \left\{ \left\vert \int_{0}^{t}h_{i}\left( t,s,x\left( \eta \left(
s\right) \right) ,y\left( \eta \left( s\right) \right) ,z\left( \eta \left(
s\right) \right) \right) ds\right\vert ,\text{ }t,s\in \mathbb{R}%
_{+}\right\} <D.
\end{equation*}%
It is easy to see that for an $r_{0}>0,$ we have%
\begin{equation*}
\varphi \left( r_{0}\right) +\frac{1}{2}+\Phi \left( D\right) \leqslant
r_{0},
\end{equation*}%
holds and the condition $\left( v\right) $ is valid.

Finally, the system has at least one solution in $BC\left( \mathbb{R}%
_{+}\right) \times BC\left( \mathbb{R}_{+}\right) \times BC\left( \mathbb{R}%
_{+}\right) .$
\end{example}

\bigskip

%Below is the list of references that are cited in the text. Do not include any
%paper that are not cited in the text. Make sure that the details are correct.

\vspace{2cc}

\end{document}